\documentclass[11pt]{article}

\usepackage[a4paper,margin=1in]{geometry}
\usepackage[T1]{fontenc}
\usepackage[utf8]{inputenc}
\usepackage{lmodern}
\usepackage{microtype}
\usepackage{amsmath,amssymb,amsthm,mathtools}
\usepackage{array,booktabs,longtable}
\usepackage{enumitem}
\usepackage{xcolor}
\usepackage{listings}
\usepackage{fancyvrb}
\usepackage[hidelinks]{hyperref}

\newtheorem{theorem}{Theorem}[section]
\newtheorem{proposition}[theorem]{Proposition}

\theoremstyle{definition}

\newtheorem{remark}[theorem]{Remark}

\newcommand{\Sym}{\operatorname{Sym}}
\newcommand{\GL}{\operatorname{GL}}

\lstdefinestyle{pythonstyle}{
	language=Python,
	basicstyle=\ttfamily\scriptsize,
	breaklines=true,
	breakatwhitespace=false,
	showstringspaces=false,
	columns=fullflexible,
	frame=single,
	numbers=left,
	numberstyle=\tiny,
	tabsize=4,
	captionpos=b
}

\title{Counterexamples to Norm Conjectures of Wehlau in Modular Invariant Theory}

\author{Muhammad Fazeel Anwar\\
	\small Department of Mathematics, Sukkur IBA University, Sukkur, Pakistan. \\ fazeel.anwar@iba-suk.edu.pk}

\begin{document}

\maketitle

\begin{abstract}
	Let $G$ be a finite group and let $V$ be a finite-dimensional $G$-module over a field $k$. We construct explicit counterexamples in characteristic $2$ to conjectures of Wehlau concerning norms. We give faithful four-dimensional representations of $C_2^3$ and $C_2^4$ over $\mathbb F_8$ for which every nonlinear orbit norm is decomposable; in the $C_2^4$ example the invariant ring is polynomial, thereby disproving both Wehlau's norm conjecture and its unrestricted extension. 
	
\end{abstract}

\section{Introduction}
	Let $V$ be a finite-dimensional vector space over a field $k$ and let $G$ be a finite subgroup of $\GL(V)$. We choose a basis $\{z_0,\ldots,z_n\}$ for the dual space $V^*$. The action of $G$ on $V$ induces a natural action on $V^*$, which extends to an action on the symmetric algebra $k[V]=\Sym(V^*)=k[z_0,\ldots,z_n]$. We denote the ring of invariants by $k[V]^G$. A representation of $G$ is said to be modular if the characteristic of $k$ divides $|G|$. For a finitely generated graded algebra $R=\bigoplus_{i=0}^{\infty}R_i$, we denote by $R_{+}=\bigoplus_{i>0}R_i$ its ideal of positive-degree elements. An element of $R$ is called decomposable if it lies in $(R_{+})^2$.

	For a linear form $\ell\in V^*$, we distinguish the orbit norm and the full group norm $N_G^{\mathrm{orb}}(\ell)=\prod_{m\in G\ell}m, \, N_G^{\mathrm{full}}(\ell)=\prod_{g\in G}g(\ell)$. Thus $N_G^{\mathrm{full}}(\ell)=\bigl(N_G^{\mathrm{orb}}(\ell)\bigr)^{|\operatorname{Stab}_G(\ell)|}$. Following Wehlau's terminology, the orbit norm is nonlinear precisely when $\ell$ is not fixed by $G$. We show that both Wehlau's polynomial-ring conjecture and its stronger unrestricted form are false. The first example is a faithful four-dimensional representation of $C_2^3$ over $\mathbb F_8$ whose invariant ring has minimal generator degrees $1,1,4,4,6$, whereas every nonlinear orbit norm has degree $8$. More strongly, we construct a faithful four-dimensional representation of $C_2^4$ over $\mathbb F_8$ whose invariant ring is polynomial with generator degrees $1,1,4,4$, but for which every nonlinear orbit norm has degree $8$ or $16$. We also give counterexamples for the full group norm, including a family in every prime characteristic.

	Norms and orbit products are standard tools in modular invariant theory; a general account is given by Campbell and Wehlau \cite{CW}. The invariant rings of modular representations of elementary abelian $p$-groups were studied systematically by Campbell, Shank, and Wehlau \cite{CSW}, while the Klein four group in characteristic $2$ was treated by Sezer and Shank \cite{SS}. Polynomial invariant rings in positive characteristic, particularly for groups generated by pseudoreflections, were studied by Nakajima \cite{Nak} and Kemper and Malle \cite{KM}. The conjectures considered here were stated by Wehlau \cite{OP,WC}. Our main results are the following.

	\begin{theorem}
		\label{thm:norm-unrestricted}
		Let $k=\mathbb F_8=\mathbb F_2(\alpha)$, where
		$\alpha^3=\alpha+1$, and let
		$G=\langle\sigma,\tau,\rho\rangle\cong C_2^3$ act faithfully on $S=k[x_1,x_2,y_1,y_2]$ by fixing $x_1,x_2$ and by
		\[
		\begin{aligned}
			\sigma(y_1)&=y_1+x_1,
			&\sigma(y_2)&=y_2+x_2,\\
			\tau(y_1)&=y_1+\alpha x_2,
			&\tau(y_2)&=y_2+x_1,\\
			\rho(y_1)&=y_1+\alpha^2x_1,
			&\rho(y_2)&=y_2+(\alpha+\alpha^2)x_2.
		\end{aligned}
		\]
		Then $S^G$ has a minimal homogeneous algebra-generating set of degrees $1,1,4,4,6$. Every nonfixed linear form has orbit size $8$.  Consequently every nonlinear orbit norm has degree $8$ and belongs to $(S^G_+)^2$. Hence no nonlinear orbit norm can occur in a homogeneous minimal algebra generating set of $S^G$, and the unrestricted form of Wehlau's norm conjecture is false.
	\end{theorem}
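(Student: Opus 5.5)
The plan is to split the statement into three parts: the orbit-size claim, which is a direct linear-algebra computation; the structure of $S^G$, which carries the real work; and the decomposability of norms, which then follows formally.

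\emph{Orbit sizes.} Write $g=\sigma^a\tau^b\rho^c$ with $a,b,c\in\mathbb F_2$. Then $g$ fixes $x_1,x_2$ and sends
\[
y_1\mapsto y_1+(a+c\alpha^2)x_1+b\alpha x_2,\qquad y_2\mapsto y_2+bx_1+(a+c(\alpha+\alpha^2))x_2 .
\]
Take $\ell=u_1x_1+u_2x_2+v_1y_1+v_2y_2$. Then $g(\ell)-\ell$ is the linear form
\[
\bigl(v_1(a+c\alpha^2)+v_2b\bigr)x_1+\bigl(v_1b\alpha+v_2(a+c(\alpha+\alpha^2))\bigr)x_2 .
\]
A linear form is nonfixed exactly when $(v_1,v_2)\neq 0$. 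For such $v$, the map $(a,b,c)\mapsto g(\ell)-\ell$ is $\mathbb F_2$-linear, and I will show its kernel is trivial. Then the stabiliser is trivial and the orbit has size $8$; the same computation also shows the action is faithful.

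The kernel check splits into cases.
\begin{itemize}
\item If $v_1=0$, the $x_1$-coefficient forces $b=0$. The $x_2$-coefficient then forces $a=c=0$, because $\alpha+\alpha^2\notin\mathbb F_2$.
\item If $v_1\ne0$, put $t=v_2/v_1$.
\begin{itemize}
\item If $b=0$, then $a+c\alpha^2=0$ gives $a=c=0$.
\item If $b=1$, then $t=a+c\alpha^2$. The second equation becomes $\alpha+t\,(a+c(\alpha+\alpha^2))=0$. Using $\alpha^3=\alpha+1$ and $\alpha^4=\alpha^2+\alpha$, the four choices of $(a,c)$ give left-hand sides $\alpha$, $\alpha+1$, $\alpha^2+\alpha+1$ and $\alpha^2$. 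None of these is zero.
\end{itemize}
\end{itemize}
Hence every nonlinear orbit norm has degree $8$.

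\emph{Structure of $S^G$.} This is the main obstacle. I would first exhibit explicit invariants $f_1,f_2$ of degree $4$ and $h$ of degree $6$, and verify that they are invariant under the three generators. For the degree-$4$ invariants I would look for relative norms: products over cosets of index-$2$ subgroups, or quadratic-in-$y$ expressions such as $y_i^2+y_i\cdot(\text{linear in }x)$, symmetrised. Next I would check that $x_1,x_2,f_1,f_2$ is a homogeneous system of parameters, by showing that $V(x_1,x_2,f_1,f_2)=\{0\}$ over $\bar k$; for this it should suffice that $f_1,f_2$ modulo $(x_1,x_2)$ are $y_1^4,y_2^4$ up to triangular terms. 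Set $A=k[x_1,x_2,f_1,f_2]$.

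Since $\dim V^G=2$, the Ellingsrud--Skjelbred bound gives depth $S^G\ge\min(4,\dim V^G+2)=4$, so $S^G$ is Cohen--Macaulay. It is therefore free over $A$ of rank $(1\cdot1\cdot4\cdot4)/|G|=2$, so $S^G=A\oplus A\,s$ for a single homogeneous secondary invariant $s$. To identify $s$ with $h$, I would check that $h\notin A$; this is a finite linear-algebra check in degree $6$. I would also check that there is no invariant of degree $<6$ outside $A$. Since the secondary has degree equal to the least degree in which $S^G$ exceeds $A$, this gives $s=h$ up to an element of $A$. Computing $\dim S^G_d$ for $d\le6$ as the kernel of $\sigma-1,\tau-1,\rho-1$ on $S_d$ is routine, though I would likely confirm it by computer.

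\emph{Minimality and decomposability.} With $S^G=A\oplus Ah$, the set $x_1,x_2,f_1,f_2,h$ generates $S^G$. It is minimal: $f_i\notin k[x_1,x_2]+\bigl(k f_j\bigr)$ by comparing leading terms, and $h\notin A$. So the generator degrees are $1,1,4,4,6$, and these degrees are the same for every minimal homogeneous generating set. Finally, any homogeneous invariant of degree $8$ is a polynomial in these generators, and no generator has degree $8$. Hence every monomial appearing has at least two positive-degree factors, and the invariant lies in $(S^G_+)^2$. In particular every nonlinear orbit norm is decomposable, cannot belong to a homogeneous minimal generating set, and Wehlau's unrestricted conjecture fails.
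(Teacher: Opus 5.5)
Your argument is correct, and for the structure of $S^G$ it takes a genuinely different route from the paper. The orbit part is the paper's argument written in coordinates. The paper observes that every nonzero combination $aI_2+bT+cU$ of the shift matrices is invertible, its seven determinants being the seven nonzero elements of $\mathbb F_8$; your four values for $b=1$ are exactly those determinants.

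The real difference is in the ring structure, where the paper never uses depth. It writes down all five generators and invokes Symonds' regularity bound to know that $S^G$ is generated as a module over $k[x_1,x_2,f_3,f_4]$ in degrees at most $6$. It then checks by row reduction that $S^G_d$ equals the degree-$d$ part of $k[f_1,\dots,f_5]$ for $d\le 6$, and computes $S^G_+/(S^G_+)^2$ in those degrees. That recipe works whether or not $S^G$ is Cohen--Macaulay, but it needs a deep theorem and a sizeable computation.

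Your use of Ellingsrud--Skjelbred is legitimate, since $\dim V^G+2=\dim V=4$. It gives freeness of rank $16/8=2$, hence the sharper conclusion $S^G=A\oplus Ah$. This is a hypersurface with Hilbert series $(1+t^6)/\bigl((1-t)^2(1-t^4)^2\bigr)$, and minimality becomes nearly automatic. Your route can even spare you computing $\dim S^G_d$ and writing down an explicit $h$. No nonidentity element is a pseudoreflection, since every nonzero shift matrix has rank $2$. So the coefficient of $(1-t)^{-3}$ in the Laurent expansion of the Hilbert series at $t=1$ vanishes, and for $S^G=A\oplus As$ this forces $\deg s=6$.

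What you leave unsupplied is the pair of degree-$4$ primaries, which the paper gives explicitly. Your relative-norm suggestion does produce them. The form $q=y_1^2+\alpha y_2^2+(x_1+\alpha x_2)y_1+\alpha(x_1+x_2)y_2$ is $\langle\sigma,\tau\rangle$-invariant, and $q'=\alpha y_1^2+y_2^2+\bigl((1+\alpha)x_1+\alpha^2x_2\bigr)y_1+\bigl(x_1+(\alpha+\alpha^2)x_2\bigr)y_2$ is $\langle\tau,\rho\rangle$-invariant. Hence $q\,\rho(q)$ and $q'\,\sigma(q')$ are invariants of degree $4$. Modulo $(x_1,x_2)$ they reduce to $y_1^4+\alpha^2y_2^4$ and $\alpha^2y_1^4+y_2^4$, which are independent because $1+\alpha^4\neq0$.
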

	
	\begin{theorem}
		\label{thm:norm-polynomial}
		Let $k=\mathbb F_8=\mathbb F_2(\alpha)$, where
		$\alpha^3=\alpha+1$, and let
		$G=\langle\sigma_1,\sigma_2,\sigma_3,\sigma_4\rangle$ act faithfully on $S=k[x_1,x_2,y_1,y_2]$ by fixing $x_1,x_2$ and by
		\[
		\begin{aligned}
			\sigma_1(y_1)&=y_1+x_1,
			&\sigma_1(y_2)&=y_2,\\
			\sigma_2(y_1)&=y_1,
			&\sigma_2(y_2)&=y_2+x_2,\\
			\sigma_3(y_1)&=y_1+(1+\alpha^2)x_1+x_2,
			&\sigma_3(y_2)&=y_2+\alpha x_1+\alpha^2x_2,\\
			\sigma_4(y_1)&=y_1+(1+\alpha^2)x_1+\alpha^2x_2,
			&\sigma_4(y_2)&=y_2+(1+\alpha^2)x_1+\alpha^2x_2.
		\end{aligned}
		\]
		Then $G\cong C_2^4$ and $S^G$ is a polynomial algebra with minimal homogeneous generator degrees $1,1,4,4$. Every nonfixed linear form has orbit size $8$ or $16$.  Consequently
		every nonlinear orbit norm belongs to $(S^G_+)^2$, so no nonlinear orbit norm can occur in a homogeneous minimal algebra generating set.  Thus Wehlau's norm conjecture is false even when the invariant ring is polynomial.
	\end{theorem}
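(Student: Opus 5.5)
The plan is to exploit the fact that $G$ acts on $S$ by translations. Write $g(y_1)=y_1+r_1(g)$ and $g(y_2)=y_2+r_2(g)$ with $r_i(g)\in kx_1\oplus kx_2$. Composition adds translation vectors, so $g\mapsto (r_1(g),r_2(g))$ is an injective group homomorphism from $G$ into the additive group of $(kx_1\oplus kx_2)^2$. Its image is the $\mathbb F_2$-span $W$ of the four vectors attached to $\sigma_1,\dots,\sigma_4$.

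\textbf{Step 1: $G\cong C_2^4$ and faithfulness.} Writing $\mathbb F_8$ in the basis $1,\alpha,\alpha^2$ over $\mathbb F_2$, each of the four generators gives a vector in $\mathbb F_2^{12}$. I will check that these four vectors are $\mathbb F_2$-independent. Then $|W|=16$, and $G\cong W\cong C_2^4$ acts faithfully.

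\textbf{Step 2: two quartic invariants.} I will look for $f\in S^G_4$ that is additive ($2$-linearized) in $y$:
\[
f=\sum_{i=1,2}\bigl(a_i y_i^4+b_i y_i^2+c_i y_i\bigr),
\]
with $a_i\in k$, $b_i\in k[x]_2$ and $c_i\in k[x]_3$. For such $f$ we have $g(f)-f=f(r_1(g),r_2(g))$, and this is additive in $g$. So $f$ is invariant exactly when $f$ vanishes at the four generating vectors of $W$. Comparing coefficients of $x_1,x_2$ turns this into a linear system over $k$ for the unknown coefficients. I expect its solution space to contain two invariants $f_1,f_2$ whose leading parts $a_{1}^{(j)}y_1^4+a_{2}^{(j)}y_2^4$ ($j=1,2$) are linearly independent.

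If they are, then setting $x_1=x_2=f_1=f_2=0$ forces $y_1=y_2=0$. Hence $x_1,x_2,f_1,f_2$ is a homogeneous system of parameters. Its degree product is $1\cdot1\cdot4\cdot4=16=|G|$. By the standard criterion (Kemper), an hsop whose degree product equals $|G|$ generates $S^G$, so $S^G=k[x_1,x_2,f_1,f_2]$ is polynomial. Minimality of the degrees $1,1,4,4$ follows because $S^G_1=kx_1+kx_2$ and $S^G_2=k[x]_2$, $S^G_3=k[x]_3$. Any $y$-dependent invariant of degree at most $3$ would have a translation-invariant leading $y$-part, which is impossible for orbits of size at least $8$ (see Step 3).

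\textbf{Step 3: orbit sizes.} A linear form is $\ell=ay_1+by_2+(\text{linear in }x)$, and its orbit is $\ell+\{ar_1(g)+br_2(g): g\in G\}$. The orbit size is $16/|K_{a,b}|$, where $K_{a,b}$ is the kernel of the $\mathbb F_2$-linear map $W\to kx_1\oplus kx_2$, $w\mapsto ar_1+br_2$. The claim is that $|K_{a,b}|\le 2$ for all $(a,b)\neq(0,0)$.

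Since $K_{a,b}$ depends only on $[a:b]\in\mathbb P^1(\mathbb F_8)$, only $9$ cases need checking.
\begin{itemize}[label={}]
\item For each such point I will compute the four images of the generators in $k^2$.
\item I will then verify that their $\mathbb F_2$-span has dimension at least $3$.
\end{itemize}
Equivalently, no $2$-dimensional subspace of $W$ is annihilated. I expect this finite verification to be the main obstacle: it is tedious and error-prone by hand. I would organise it by first handling $b=0$ and $a=0$, where the rows are the explicit vectors listed in the statement, and then $a=1$, $b=\beta\in\mathbb F_8^\times$. This gives $7$ checks of a $4\times 6$ $\mathbb F_2$-matrix rank, ideally confirmed by computer.

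\textbf{Step 4: norms are decomposable.} By Step 3, every nonfixed $\ell$ has orbit norm $N_G^{\mathrm{orb}}(\ell)$ of degree $8$ or $16$. This norm lies in $k[x_1,x_2,f_1,f_2]$. Every monomial in generators of degree at most $4$ with total degree at least $8$ has at least two factors. Hence the norm lies in $(S^G_+)^2$, so no nonlinear orbit norm can appear in a homogeneous minimal generating set.
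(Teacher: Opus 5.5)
Your proposal is correct and follows the paper's proof essentially step for step. The steps match: $\mathbb F_2$-independence of the shift matrices; two quartic invariants completing $x_1,x_2$ to an hsop, plus the degree-product criterion (which the paper proves inline from $[\operatorname{Frac}(S):\operatorname{Frac}(A)]=16=|G|$ and normality of $A$); the rank of $b\mapsto B^Tb$ at the nine points of $\mathbb P^1(\mathbb F_8)$; and decomposability by degree.

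The computations you defer come out as you expect. The paper's $F_1,F_2$ are exactly additive invariants of your form, with leading terms $y_1^4$ and $y_2^4$, and the ranks are $3$ or $4$. You should drop your separate minimality argument via leading $y$-parts. As phrased it is not a valid argument, and it is unnecessary: a polynomial ring on algebraically independent generators of degrees $1,1,4,4$ automatically has these as its minimal generator degrees.
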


	\begin{theorem}\label{thm:norm-counterexample}
		Let $k=\mathbb F_2$, and let $G=C_2\times C_2=\langle\sigma,\tau\rangle$ act faithfully on $R=k[x,y,z]$ by
		\[
		\begin{array}{c|ccc}
			&x&y&z\\ \hline
			\sigma&x&y+x&z\\
			\tau&x&y&z+x.
		\end{array}
		\]
		Then $R^G=k[x,y^2+xy,z^2+xz]$ is a polynomial algebra, but $N_G^{\mathrm{full}}(\ell)\in(R^G_+)^2$ for every nonzero linear form $\ell\in R_1$. Hence no full group norm of a linear form occurs in a homogeneous minimal algebra-generating set of $R^G$, and Wehlau's conjecture is false for the full group norm.
	\end{theorem}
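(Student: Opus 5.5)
The plan is to verify the two claims separately: first that $R^G$ is polynomial on the three listed generators, then that every full norm of a linear form is decomposable. The second part reduces to a stabilizer computation.

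\textbf{Polynomiality.} I would first check that $x$, $Y:=y^2+xy$ and $Z:=z^2+xz$ are invariant. For instance,
\[
\sigma(Y)=(y+x)^2+x(y+x)=y^2+x^2+xy+x^2=Y ,
\]
while $\tau$ fixes $Y$ trivially; the computation for $Z$ is symmetric with the roles of $\sigma$ and $\tau$ exchanged. The elements $x,Y,Z$ are algebraically independent, since modulo $x$ they reduce to $y^2,z^2$. Equivalently, $R$ is integral over $A:=k[x,Y,Z]$, because $y$ and $z$ satisfy monic quadratics over $A$; hence $x,Y,Z$ form a homogeneous system of parameters.

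Their degrees multiply to $1\cdot2\cdot2=4=|G|$. I would then invoke the standard criterion: a homogeneous system of parameters of invariants whose degree product equals $|G|$ generates the invariant ring. This gives $R^G=k[x,Y,Z]$.

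If I wanted to avoid citing that criterion, I would argue directly by field degrees. $R$ is a free $A$-module with basis $1,y,z,yz$, so $[\operatorname{Frac}R:\operatorname{Frac}A]=4=|G|$. This forces $\operatorname{Frac}(R)^G=\operatorname{Frac}(A)$. Since $A$ is a polynomial ring, hence normal, and $R^G$ is integral over $A$, we get $R^G=A$. I expect this step to be the only place requiring a cited result, so it is the main, though mild, obstacle.

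\textbf{Norms.} Let $\ell=ax+by+cz$ be nonzero, with $a,b,c\in\mathbb F_2$. The group acts by $\sigma(\ell)=\ell+bx$ and $\tau(\ell)=\ell+cx$. Hence every $g\in G$ sends $\ell$ to $\ell+\epsilon_g x$ with $\epsilon_g\in\mathbb F_2$, and $g\mapsto\epsilon_g$ is a homomorphism $G\to\mathbb F_2$. Its kernel $\operatorname{Stab}_G(\ell)$ therefore has order at least $2$, so $G\ell\subseteq\{\ell,\ell+x\}$.

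This leaves two cases.
\begin{itemize}
\item If $b=c=0$, then $\ell=x$ and $N_G^{\mathrm{full}}(\ell)=x^4=x\cdot x^3$.
\item Otherwise the orbit is $\{\ell,\ell+x\}$, with stabilizer of order $2$. The orbit norm $N^{\mathrm{orb}}_G(\ell)=\ell^2+x\ell$ is a nonzero invariant of degree $2$, and by the relation in the introduction
\[
N_G^{\mathrm{full}}(\ell)=\bigl(N_G^{\mathrm{orb}}(\ell)\bigr)^2 .
\]
\end{itemize}

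In both cases the full norm is a product of two positive-degree invariants, so it lies in $(R^G_+)^2$. Decomposable elements never occur in a homogeneous minimal generating set, by graded Nakayama. This yields the final assertion of the theorem.
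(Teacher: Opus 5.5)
Your proof is correct. The norm part is essentially the paper's, but the polynomiality step takes a different route.

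\textbf{Polynomiality.} The paper computes the invariants directly in two stages.
\begin{itemize}
\item Using $y^2=a+xy$, it writes $R=k[x,z,a]\oplus y\,k[x,z,a]$. It proves uniqueness, and reads off $R^{\langle\sigma\rangle}=k[x,z,a]$, by applying $\sigma$ and subtracting, which leaves $xv=0$.
\item It repeats this with $\tau$ and $z^2=b+xz$ to get $k[x,z,a]^{\langle\tau\rangle}=k[x,a,b]$.
\end{itemize}
You instead use the global criterion: $x,Y,Z$ form a homogeneous system of parameters of invariants with degree product $4=|G|$. You back this up with the field-degree comparison against $[\operatorname{Frac}R:\operatorname{Frac}(R)^G]=|G|$, which is Artin's theorem and uses faithfulness, followed by normality of $A$. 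This is exactly the argument the paper itself uses for Theorem~\ref{thm:norm-polynomial}.

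\textbf{What each route buys.} The paper's tower computation is entirely elementary, needing no Galois theory or integral closure. It also yields extra structure, namely $R^{\langle\sigma\rangle}$ and explicit free bases at each stage. However, it relies on the special product shape of this action, in which each generator moves a different variable. Your argument is uniform: it works whenever one has invariants forming a homogeneous system of parameters with degree product $|G|$, however the group is built up.

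\textbf{A simplification.} In your field-degree argument you only need $[\operatorname{Frac}R:\operatorname{Frac}A]\le 4$. This is immediate from $\operatorname{Frac}R=\operatorname{Frac}A(y,z)$ with $y$ and $z$ each quadratic over $\operatorname{Frac}A$. So your unproved assertion that $1,y,z,yz$ is an $A$-basis of $R$ can be dropped.

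\textbf{Norms.} Your observation that $g\mapsto\epsilon_g$ is a homomorphism $G\to\mathbb F_2$ with kernel $\operatorname{Stab}_G(\ell)$ is a cleaner replacement for the paper's case list showing every stabilizer is nontrivial. From there both arguments write the full norm as a power of the orbit product.
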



	The remainder of the paper proves these results and records their consequences.

\section{Proof of Theorem~\ref{thm:norm-unrestricted}}
\label{sec:unrestricted-norm}

Put $k=\mathbb F_8=\mathbb F_2(\alpha)$, where $\alpha^3=\alpha+1$, and let $R=k[x_1,x_2,y_1,y_2]^G$.  The action on the pair $(y_1,y_2)$ is given by translations whose shift matrices are $I_2,\,
T=\begin{pmatrix}0&\alpha\\ 1&0\end{pmatrix},\,
U=\begin{pmatrix}\alpha^2&0\\0&\alpha+\alpha^2\end{pmatrix}$. Every nonzero $\mathbb F_2$-linear combination of $I_2,T,U$ is
invertible.  Indeed, the determinants of the seven nonzero
combinations are precisely the seven nonzero elements of $k$.  The
three transformations are commuting involutions and their shift
matrices are $\mathbb F_2$-linearly independent.  Hence they generate a faithful group isomorphic to $C_2^3$.

\begin{proposition}
	\label{prop:unrestricted-ring}
	The invariant ring $R$ is minimally generated by five homogeneous
	invariants of degrees $1,1,4,4,6$.
\end{proposition}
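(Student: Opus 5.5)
Write $A=k[x_1,x_2]$ and $S=A[y_1,y_2]$. The approach is to produce explicit invariants, show they generate by a Hilbert series (or degree) comparison, and then check minimality degree by degree.

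\emph{Step 1: candidate invariants.} Since $G$ acts on $y_i$ by translations by linear forms in $x_1,x_2$, the natural invariants come from the additive structure of the orbits. For each $i$ the orbit of $y_i$ is $y_i+W_i$, where $W_i\subset A_1$ is the $\mathbb F_2$-span of the shifts of $y_i$. Here $W_1=\langle x_1,\alpha x_2,\alpha^2x_1\rangle$ and $W_2=\langle x_2,x_1,(\alpha+\alpha^2)x_2\rangle$, both $3$-dimensional over $\mathbb F_2$. The orbit products $N_i=\prod_{w\in W_i}(y_i+w)$ have degree $8$, not $4$. To reach degree $4$ I would instead use $\mathbb F_2$-linear combinations $y_1+\lambda y_2$, or more generally additive polynomials. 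For $f=F(y_1)+H(y_2)$ with $F,H$ $2$-polynomials of degree $4$ (coefficients in $A$), invariance requires $F(s_1(g))+H(s_2(g))=0$ for the three generators, where $s_j(g)$ is the shift of $y_j$. This is a linear system over $A$, and the claim is that its solution space in degree $4$ is a free module giving two independent invariants $f_1,f_2$ of degree $4$, each of the form $y_1^4+\dots$, $y_2^4+\dots$ mixed. In degree $6$ I expect one extra invariant $h$, of the shape $y_1^2y_2^2\cdot(\text{linear})+\dots$, or obtained as a transfer or a polarization-type product. I would find it by solving the linear invariance equations in the monomials of degree $6$ modulo $A[f_1,f_2]$.

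\emph{Step 2: generation.} The subalgebra $B=k[x_1,x_2,f_1,f_2]$ is a homogeneous system of parameters if $f_1,f_2$ have leading terms that are powers of $y_1,y_2$ modulo $(x_1,x_2)$. I will check this, possibly after a linear change making the top terms $y_1^4,y_2^4$. Then $\dim B=4$, $\deg B=16$ over the h.s.o.p., and $|G|=8$, so $S$ is free of rank $16$ over $B$. Hence $S^G$ is a finitely generated $B$-module of rank $16/8=2$. I would argue that $S^G=B\oplus Bh$ as follows. The field degree is $[\mathrm{Frac}(S):\mathrm{Frac}(B)]=16=8\cdot 2$, so $S^G$ has rank $2$ over $B$, and $B+Bh\subseteq S^G$ is a rank-2 submodule. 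To get equality, I would compare Hilbert series: show $h^2\in B+Bh$ (a degree-$12$ relation), giving the hypersurface $B[h]/(h^2+ah+b)$ with Hilbert series $(1+t^6)/((1-t)^2(1-t^4)^2)$. Equality with $S^G$ then follows from a standard criterion. In dimension $4$ with $G$ an elementary abelian $2$-group, I would use that the ring $B[h]$ is normal (a Cohen–Macaulay hypersurface, checking $R_1$ by a Jacobian computation) and birational to $S^G$, since the degree matches. Also $S^G$ is integral over it, so they coincide. An alternative that avoids normality is the Kemper-style criterion: $B[h]$ is a hypersurface with $\deg(B[h])\cdot|G|=\prod\deg(\text{h.s.o.p.})$, and it equals the invariant ring provided it is contained in it and the Jacobian locus has codimension $\ge 2$.

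\emph{Step 3: minimality.} Degrees $1,1$ are forced because $S^G_1=\langle x_1,x_2\rangle$: a nonfixed linear form has a nonzero shift since no nonzero $y$-combination is killed by all shift matrices. In degrees $2$ and $3$ there are no invariants outside $A$. To see this, suppose an invariant has $y$-degree $\ge1$ and leading part in $y$ quadratic or linear. The invariance equations reduce to the invertibility of every nonzero $\mathbb F_2$-combination of $I_2,T,U$, shown above, which forces vanishing. So $f_1,f_2$ are indecomposable, and they are independent modulo $A_4$ and modulo products. Finally, $h$ is not in $B_6$ by construction. The main obstacle is Step 1 together with the equality in Step 2: finding $f_1,f_2,h$ explicitly and verifying $S^G=B\oplus Bh$. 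This is the genuinely computational part, and I would settle it by a direct computer algebra calculation (Magma or Python over $\mathbb F_8$), confirming the Hilbert series against Molien-type counts computed via Gröbner bases.
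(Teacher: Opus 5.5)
Your plan is sound, but you prove generation by a genuinely different route from the paper.

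\textbf{How the two routes differ.} The paper writes down $f_3,f_4$ and a degree-$6$ invariant $f_5$; $f_3,f_4$ have exactly your additive form $F(y_1)+H(y_2)$ over $k[x_1,x_2]$. It then uses Symonds' regularity bound: $R$ is generated as a $k[x_1,x_2,f_3,f_4]$-module in degrees $\le 6$. After that it only checks $\dim_k R_d=\dim_k k[f_1,\dots,f_5]_d$ for $d\le 6$ by linear algebra, and reads minimality off $\dim_k R_d/(R_+^2)_d$.

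You replace Symonds' theorem by a birational argument. Since $[\operatorname{Frac}(S^G):\operatorname{Frac}(B)]=16/8=2$, any degree-$6$ invariant $h\notin B$ generates $\operatorname{Frac}(S^G)$ over $\operatorname{Frac}(B)$. If $B[h]$ is normal, integrality of $S^G$ over $B[h]$ forces $S^G=B[h]$. Normality comes from Serre's criterion, with $R_1$ checked by the Jacobian criterion, which is legitimate because $\mathbb F_8$ is perfect. The check does succeed: by the paper's result $B[h]=B[f_5]=S^G$.

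\textbf{What each route buys.} Yours avoids a deep theorem and gives more structure directly: $S^G=B\oplus Bh$ is a Cohen--Macaulay hypersurface. This is consistent with the paper's table, which matches the coefficients of $(1+t^6)/\bigl((1-t)^2(1-t^4)^2\bigr)$ through degree $6$. The price is computing a degree-$12$ relation and bounding the codimension of a singular locus, instead of linear algebra in degrees $\le 6$.

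Two simplifications help you:
\begin{itemize}
\item The relation needs no search. $B$ is integrally closed and $h$ is integral of degree $2$ over $\operatorname{Frac}(B)$, so its minimal polynomial is $T^2+aT+b$ with $a,b\in B$, and $B[h]\cong B[T]/(T^2+aT+b)$.
\item In characteristic $2$ the $h$-partial derivative of this relation is just $a$, so the singular locus lies in $\{a=0\}$.
\end{itemize}
Note that a Hilbert-series comparison alone cannot close the argument, since there is no Molien formula in this modular setting. Normality is what does the work.

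\textbf{A flaw in Step 3.} Your justification that there are no new invariants in degrees $2$ and $3$ does not hold as stated, because invertibility of the shift matrices alone is not enough. In characteristic $2$ a top component $a_1y_1^2+a_2y_2^2$ has vanishing first-order variation, so the constraint only appears in lower-order terms. For a $y$-linear top component with coefficients $c$ in $k[x_1,x_2]$, the condition $c^{T}Mx=0$ can hold with $M$ invertible. For example, $x_2y_1+x_1y_2$ is $\sigma$-invariant although $\sigma$ has shift matrix $I_2$. Ruling these cases out uses the specific matrices $T$ and $U$.

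You do not need this step, however. Once $S^G=B\oplus Bh$ with $\deg h=6$, you get $S^G_d=k[x_1,x_2]_d$ for $d\le 3$ and $\bigl((S^G_+)^2\bigr)_6\subseteq B_6$. Minimality of your generators $x_1,x_2,f_1,f_2,h$ follows at once.

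Your guessed shape for $h$ is also off: the paper's $f_5$ is again additive in $y$, with top part $x_2^2y_1^4+\bigl((1+\alpha)x_1x_2+x_1^2\bigr)y_2^4$. This is harmless, since you solve for $h$ by linear algebra.
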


\begin{proof}
	Set $f_1=x_1$, $f_2=x_2$, and define
	\[
	\begin{aligned}
		f_3={}&y_2^4+(1+\alpha^2)x_2^2y_2^2
		+(1+\alpha)x_2^2y_1^2+\alpha^2x_2^3y_2
		+(\alpha+\alpha^2)x_2^3y_1\\
		&+(1+\alpha)x_1x_2y_2^2+x_1x_2y_1^2
		+(1+\alpha^2)x_1x_2^2y_2
		+(1+\alpha)x_1^2x_2y_2+x_1^3y_2,
	\end{aligned}
	\]
	\[
	\begin{aligned}
		f_4={}&(1+\alpha^2)y_1^4+\alpha^2x_2^3y_1
		+\alpha x_1x_2y_2^2+(\alpha+\alpha^2)x_1x_2^2y_2\\
		&+\alpha^2x_1^2y_1^2+x_1^3y_1,
	\end{aligned}
	\]
	and
	\[
	\begin{aligned}
		f_5={}&x_2^2y_1^4+\alpha^2x_2^4y_1^2
		+(1+\alpha)x_1x_2y_2^4+\alpha^2x_1x_2^3y_2^2\\
		&+(1+\alpha^2)x_1x_2^3y_1^2
		+(1+\alpha+\alpha^2)x_1x_2^4y_2+x_1x_2^4y_1\\
		&+x_1^2y_2^4+\alpha^2x_1^2x_2^3y_2
		+(\alpha+\alpha^2)x_1^2x_2^3y_1\\
		&+(1+\alpha)x_1^3x_2y_2^2+x_1^4y_2^2.
	\end{aligned}
	\]
	Direct substitution gives $g(f_i)=f_i$ for every $g\in G$ and $1\leq i\leq5$.  Thus $A:=k[f_1,f_2,f_3,f_4,f_5]\subseteq R$. The four invariants $f_1,f_2,f_3,f_4$ form a homogeneous system of
	parameters.  Indeed, after setting $x_1=x_2=0$, the last two reduce to $f_3=y_2^4,\, f_4=(1+\alpha^2)y_1^4$, so their only common zero over an algebraic closure of $k$ is the origin.  Their degrees are $1,1,4,4$.  By Symonds' bound for secondary
	invariants \cite{Sy}, $R$ is generated as a module over $k[f_1,f_2,f_3,f_4]$ by homogeneous elements of degree at most $(1-1)+(1-1)+(4-1)+(4-1)=6$. Exact row reduction over $k$ gives
	\[
	\begin{array}{c|rrrrrrr}
		d&0&1&2&3&4&5&6\\ \hline
		\dim_k R_d&1&2&3&4&7&10&14\\
		\dim_k A_d&1&2&3&4&7&10&14.
	\end{array}
	\]
	Since $A_d\subseteq R_d$, it follows that $A_d=R_d$ for
	$0\leq d\leq6$.  Every required module generator therefore belongs to $A$, and hence $R=A$. Finally, exact row reduction gives
	\[
	\begin{array}{c|rrrrrr}
		d&1&2&3&4&5&6\\ \hline
		\dim_k\bigl(R_d/(R_+^2)_d\bigr)&2&0&0&2&0&1.
	\end{array}
	\]
	The images of $f_1,f_2$, of $f_3,f_4$, and of $f_5$ form bases of the
	nonzero indecomposable quotients in degrees $1,4,6$, respectively.
	Thus the displayed generating set is minimal, with degrees
	$1,1,4,4,6$.
\end{proof}

We now determine the orbits of linear forms.  Write $\ell=a_1x_1+a_2x_2+b_1y_1+b_2y_2$. If $(b_1,b_2)=(0,0)$, then $\ell$ is fixed.  Otherwise, a nonidentity element of $G$ can fix $\ell$ only if the transpose of its nonzero shift matrix annihilates $(b_1,b_2)^T$.  This is impossible because every nonzero shift matrix is invertible.  Therefore $\operatorname{Stab}_G(\ell)=1$ and $|G\ell|=8$ for every nonfixed linear form. It follows that every nonlinear orbit norm $N_G^{\mathrm{orb}}(\ell)=\prod_{m\in G\ell}m$ has degree $8$.  Proposition~\ref{prop:unrestricted-ring} shows that $R$ is generated in degrees $1,4,6$, and therefore $R_8=(R_+^2)_8$.  Hence every nonlinear orbit norm is decomposable and no such norm can occur in a homogeneous minimal algebra-generating set. This proves Theorem~\ref{thm:norm-unrestricted}.

\begin{remark}
	Every nonfixed linear form in Theorem~\ref{thm:norm-unrestricted} has a regular orbit, yet its orbit norm is decomposable. Thus the existence of a regular orbit is not sufficient for an orbit norm to occur in a homogeneous minimal algebra-generating set.
\end{remark}

\section{Proof of Theorem~\ref{thm:norm-polynomial}}
\label{sec:polynomial-norm}

Put $k=\mathbb F_8=\mathbb F_2(\alpha)$, where
$\alpha^3=\alpha+1$, and let
$S=k[x_1,x_2,y_1,y_2]$.  The four generators have shift matrices $B_1=\begin{pmatrix}1&0\\0&0\end{pmatrix},\,
B_2=\begin{pmatrix}0&0\\0&1\end{pmatrix}, \, B_3=\begin{pmatrix}1+\alpha^2&1\\ \alpha&\alpha^2\end{pmatrix},
\, B_4=\begin{pmatrix}1+\alpha^2&\alpha^2\\
	1+\alpha^2&\alpha^2\end{pmatrix}$. Each $B_i$ has rank one, so each $\sigma_i$ is a transvection.  The four matrices are linearly independent over $\mathbb F_2$.  Since the corresponding translations commute and have order two, they generate a faithful group $G\cong C_2^4$. Define
\[
\begin{aligned}
	F_1={}&y_1^4+(1+\alpha)x_2^2y_2^2
	+(1+\alpha+\alpha^2)x_2^2y_1^2+(1+\alpha)x_2^3y_2\\
	&+(\alpha+\alpha^2)x_1x_2y_2^2
	+(\alpha+\alpha^2)x_1x_2y_1^2
	+(\alpha+\alpha^2)x_1x_2^2y_2\\
	&+(1+\alpha+\alpha^2)x_1x_2^2y_1
	+(1+\alpha)x_1^2y_1^2
	+(\alpha+\alpha^2)x_1^2x_2y_1+\alpha x_1^3y_1,
\end{aligned}
\]
and
\[
\begin{aligned}
	F_2={}&y_2^4+(1+\alpha)x_2^2y_2^2+\alpha x_2^3y_2
	+(1+\alpha)x_1x_2y_2^2+\alpha x_1x_2y_1^2\\
	&+(1+\alpha)x_1x_2^2y_2+(1+\alpha)x_1^2y_2^2
	+(1+\alpha^2)x_1^2y_1^2\\
	&+(1+\alpha)x_1^2x_2y_2+\alpha x_1^2x_2y_1
	+(1+\alpha^2)x_1^3y_1.
\end{aligned}
\]
Direct substitution gives $\sigma_i(F_j)=F_j$ for
$1\leq i\leq4$ and $j=1,2$.  Hence $A:=k[x_1,x_2,F_1,F_2]\subseteq S^G$. The four displayed invariants have only the origin as a common zero over an algebraic closure of $k$: after $x_1=x_2=0$, the final two equations become $y_1^4=y_2^4=0$.  They therefore form a homogeneous system of parameters and are algebraically independent.  Since $S$ is a polynomial ring, it is a free $A$-module of rank
$1\cdot1\cdot4\cdot4=16$. Consequently $[\operatorname{Frac}(S):\operatorname{Frac}(A)]=16$.
The action is faithful, so $[\operatorname{Frac}(S):\operatorname{Frac}(S^G)]=|G|=16$. As $A\subseteq S^G$, the two invariant fraction fields are equal. Moreover, $S^G$ is integral over $A$, while $A$ is a polynomial ring
and hence integrally closed.  Therefore $S^G=A=k[x_1,x_2,F_1,F_2]$.
In particular, $S^G$ is polynomial with minimal homogeneous generator
degrees $1,1,4,4$.

It remains to examine the norms.  Write $\ell=c_1x_1+c_2x_2+b_1y_1+b_2y_2$. If $(b_1,b_2)=(0,0)$, then $\ell$ is fixed.  For $b=(b_1,b_2)^T\neq0$, let $\mathcal B=\langle B_1,B_2,B_3,B_4\rangle_{\mathbb F_2}$ and consider the $\mathbb F_2$-linear map $\phi_b:\mathcal B\longrightarrow k^2,\qquad B\longmapsto B^Tb$. The stabilizer of $\ell$ is the kernel of $\phi_b$, and hence $|G\ell|=2^{\operatorname{rank}_{\mathbb F_2}(\phi_b)}$. There are nine projective directions for $b$.  Exact row reduction
gives
\[
\begin{array}{c|c}
	[b_1:b_2]&\operatorname{rank}_{\mathbb F_2}(\phi_b)\\ \hline
	[0:1], [1:0], [1:1], [1:\alpha], [1:1+\alpha],
	[1:1+\alpha+\alpha^2]&3,\\
	{}[1:\alpha^2], [1:1+\alpha^2], [1:\alpha+\alpha^2]&4.
\end{array}
\]
Thus every nonfixed linear form has orbit size $8$ or $16$.  More
precisely, the projective orbit size distribution is $1^9,\, 8^{48},\, 16^{12}$, that is, there are nine orbits of size 1, forty-eight orbits of size 8, and twelve orbits of size 16. Therefore every nonlinear orbit norm has degree $8$ or $16$.  Since $S^G=k[x_1,x_2,F_1,F_2],\, \deg(x_1,x_2,F_1,F_2)=(1,1,4,4)$,
every invariant of degree $8$ or $16$ is contained in $(S^G_+)^2$.
In particular, $N_G^{\mathrm{orb}}(\ell)\in(S^G_+)^2$ for every nonfixed linear form $\ell$. This proves Theorem~\ref{thm:norm-polynomial}.

\section{Proof of Theorem~\ref{thm:norm-counterexample}}
\label{sec:norm-counterexample}

Throughout this section we use the full group norm $N_G^{\mathrm{full}}(\ell)=\prod_{g\in G}g(\ell)$. Let $k=\mathbb F_2$, and let $G=C_2\times C_2=\langle\sigma,\tau\rangle$
act on $R=k[x,y,z]$ by
\[
\begin{array}{c|ccc}
	&x&y&z\\ \hline
	\sigma&x&y+x&z\\
	\tau&x&y&z+x.
\end{array}
\]

	The transformations \(\sigma\) and \(\tau\) are commuting involutions,
	and the four elements $1,\, \sigma,\, \tau,\, \sigma\tau$ act distinctly. Hence the action of \(G\) is faithful. Put $a=y^2+xy,\, b=z^2+xz$. Since the characteristic is \(2\), $\sigma(a)=(y+x)^2+x(y+x)=y^2+xy=a$. Thus $k[x,z,a]\subseteq R^{\langle\sigma\rangle}$. Using the relation $y^2=a+xy$, every element of \(R\) may be written in the form $u+yv,\, u,v\in k[x,z,a]$. This expression is unique. Indeed, if \(u+yv=0\), then applying \(\sigma\) and subtracting the original equality gives $xv=0$.
	Since \(R\) is a domain, \(v=0\), and then \(u=0\). Therefore $R=k[x,z,a]\oplus yk[x,z,a]$. Also $\sigma(u+yv)=u+(y+x)v$, so \(u+yv\) is fixed by \(\sigma\) if and only if \(xv=0\), equivalently \(v=0\). Hence
	$R^{\langle\sigma\rangle}=k[x,z,a]$. The element \(\tau\) fixes \(x\) and \(a\), and sends \(z\) to \(z+x\). Using $z^2=b+xz$, the same argument gives $k[x,z,a]=k[x,a,b]\oplus zk[x,a,b]$ and $\bigl(k[x,z,a]\bigr)^{\langle\tau\rangle} = k[x,a,b]$. Consequently,
	\[
	R^G=k[x,a,b]
	=k[x,y^2+xy,z^2+xz].
	\]
	
	Moreover, \(R\) is integral over \(k[x,a,b]\), since \(y\) and \(z\)
	satisfy the monic equations $T^2+xT+a=0, \, T^2+xT+b=0$, respectively. It follows that \(x,a,b\) are algebraically independent, and hence \(R^G\) is a polynomial algebra. Let $\ell=\alpha x+\beta y+\gamma z, \, \alpha,\beta,\gamma\in\mathbb F _2$. Then $\sigma(\ell)=\ell+\beta x,\,
	\tau(\ell)=\ell+\gamma x,\, \sigma\tau(\ell)=\ell+(\beta+\gamma)x$. Every nonzero linear form has a nontrivial stabilizer. Indeed,
	\[
	\begin{cases}
		\sigma\in\operatorname{Stab}_G(\ell),&\beta=0,\\
		\tau\in\operatorname{Stab}_G(\ell),&\gamma=0,\\
		\sigma\tau\in\operatorname{Stab}_G(\ell),&\beta=\gamma=1.
	\end{cases}
	\]
	Thus, for $H=\operatorname{Stab}_G(\ell)$, we have \(|H|\geq2\). Let $q_\ell = \prod_{gH\in G/H}g(\ell)$ be the product of the distinct elements in the orbit of \(\ell\). Then \(q_\ell\in R^G_+\), and every orbit element occurs \(|H|\) times in the full group norm. Therefore $N_G^{\mathrm{full}}(\ell)=\prod_{g\in G}g(\ell)=q_\ell^{\,|H|}\in (R^G_+)^2$. The images of the elements in a homogeneous minimal algebra-generating
	set of a connected graded algebra form a basis of $R^G_+/(R^G_+)^2$. Every norm has zero image in this quotient, so no norm of a linear form can occur in any homogeneous minimal algebra-generating set of \(R^G\).

\begin{remark}
	For the standard linear forms, one has $N_G^{\mathrm{full}}(x)=x^4$, $N_G^{\mathrm{full}}(y)=y^2(y+x)^2=(y^2+xy)^2=a^2$, and $N_G^{\mathrm{full}}(z)=z^2(z+x)^2=(z^2+xz)^2=b^2$.
	Similarly, $N_G^{\mathrm{full}}(y+z)=\bigl((y+z)(y+z+x)\bigr)^2$. Thus these norms are visibly decomposable in \(R^G\).
\end{remark}

\begin{remark}
	The proof gives a general obstruction. If a nonzero linear form $\ell$ has a nontrivial stabilizer $H\leq G$, then $N_G^{\mathrm{full}}(\ell)=
	\left(\prod_{gH\in G/H}g(\ell)\right)^{|H|}
	\in (R^G_+)^2$. Consequently, an indecomposable full group norm can occur only when the action of $G$ on the space of linear forms has a regular orbit.
\end{remark}

\begin{remark}
	The distinction between the full group norm and the orbit norm is essential. For example $\prod_{g\ell\in G\cdot y}g\ell =y(y+x)=a$, and similarly $N_G^{\mathrm{orb}}(z)=b$. These are
	minimal generators of \(R^G\). The counterexample in this section concerns the full group norm $N_G^{\mathrm{full}}(\ell)=\prod_{g\in G}g(\ell)$, rather than the orbit norm.
\end{remark}


\begin{thebibliography}{25}

\bibitem{CW} H.~E.~A.~Eddy Campbell and D.~L.~Wehlau, Modular Invariant Theory, \emph{Encyclopaedia of Mathematical Sciences}, vol.~139, Springer, Berlin, 2011, doi:10.1007/978-3-642-17404-9.

\bibitem{OP} D.~L.~Wehlau, Some problems in invariant theory, in Invariant Theory in All Characteristics, \emph{CRM Proc. Lecture Notes}, vol.~35, Amer. Math. Soc., Providence, RI, 2004, pp.~265--274, doi:10.1090/crmp/035/20.

\bibitem{Sy} P.~Symonds, On the Castelnuovo--Mumford regularity of rings of polynomial invariants, \emph{Ann. of Math. (2)} \textbf{174} (2011), no.~1, 499--517, doi:10.4007/annals.2011.174.1.14.

\bibitem{WC} D.~L.~Wehlau, Some conjectures of mine and others, available at \url{https://mast.queensu.ca/~wehlau/Conjectures.html}, accessed July 2026.

\bibitem{CSW} H.~E.~A.~Campbell, R.~J.~Shank, and D.~L.~Wehlau, Rings of invariants for modular representations of elementary abelian $p$-groups, \emph{Transform. Groups} \textbf{18} (2013), no.~1, 1--22, doi:10.1007/s00031-013-9207-z.

\bibitem{KM} G.~Kemper and G.~Malle, The finite irreducible linear groups with polynomial ring of invariants, \emph{Transform. Groups} \textbf{2} (1997), no.~1, 57--89, doi:10.1007/BF01234631.

\bibitem{Nak} H.~Nakajima, Invariants of finite groups generated by pseudo-reflections in positive characteristic, \emph{Tsukuba J. Math.} \textbf{3} (1979), no.~1, 109--122, doi:10.21099/tkbjm/1496158618.

\bibitem{SS} M.~Sezer and R.~J.~Shank, Rings of invariants for modular representations of the Klein four group, \emph{Trans. Amer. Math. Soc.} \textbf{368} (2016), no.~8, 5655--5673, doi:10.1090/tran/6516.
\end{thebibliography}
\end{document}